\definecolor{gr}{rgb}{0.7, 0.0, 0.49}
\newtheorem{theorem}{\bf Theorem}[section]
\newtheorem{corollary}{\bf Corollary}[section]
\newtheorem{example}{\bf Example}[section]
\numberwithin{equation}{section}
\begin{document}
\title{Poisson Approximation to the Convolution of Power Series Distributions}
\author[$a$ ]{{\Large A. N. Kumar}}
\author[$a$ ]{{\Large P. Vellaisamy}}
\author[$b$ ]{{\Large F. Viens}}
\affil[$a$ ]{ Department of Mathematics, Indian Institute of Technology Bombay}
\affil[ ]{Powai, Mumbai-400076 India.}
\affil[ ]{and}
\affil[$b$ ]{Department of Statistics and Probability}
\affil[ ]{Michigan State University}
\affil[ ]{East Lansing, MI 48824, USA.}
\affil[ ]{Emails: amit.kumar2703@gmail.com; pv@math.iitb.ac.in; viens@msu.edu}

\date{}

\maketitle

\begin{abstract}
\noindent 
In this article, we obtain, for the total variance distance, the error bounds between Poisson and convolution 
of power series distributions via Stein's method. This provides a unified approach to many known discrete distributions. Several Poisson
limit theorems follow as corollaries 
from our bounds. As applications, we compare  the Poisson approximation results with the negative binomial 
approximation results, for the sums of Bernoulli, geometric, and logarithmic series random variables.
\end{abstract}

\noindent
\begin{keywords}
Convolution of distributions; Poisson and negative binomial approximation; power series distribution; Stein's method.
\end{keywords}\\
{\bf MSC 2010 Subject Classifications:} Primary: 62E17, 62E20; Secondary: 60F05, 60E05.

\section{Introduction} The convolution of distributions play an important role in several applications in the areas related to rare events, waiting time, and wireless communications, among many others. It is in general difficult to find the distributions of the sums of especially independent and non-identical random variables (rvs). In such cases, approximations to a known distribution is useful in applications. For example, Poisson approximation to the convolution of Bernoulli rvs is studied by Barbour and Hall \cite{BH}, Chen \cite{LC}, Kerstan \cite{KJ}, and Le cam \cite{CAM}. The Poisson approximation to the convolution of geometric rvs is studied by, for example, Barbour \cite{BAD}, Hung and Giang \cite{HLGT} and Teerapabolarn and Wongkasem \cite{TKWP}. The Poisson approximation to the convolution of negative binomial rvs is studied by Teerapabolarn \cite{Teeraa}, and Vellaisamy and Upadhye \cite{VPUNS}, among  others.

In this article, we focus on the convolution of power series distributions (PSD) and obtain the upper bounds for its approximation to Poisson distribution, using Stein's method. The distance metric used is the total variation distance. We show that the limit theorems given by P\'{e}rez-Abreu \cite{APV} follow from our results, as special cases. As examples, we discuss the Poisson convergence result for binomial, negative binomial and logarithmic series distributions. Furthermore, we mention the negative binomial approximation results and compare the bounds with Poisson approximation results, either  theoretically or numerically. It is shown that our bounds are either comparable to or an improvement over the existing bounds. We have discussed also the Poisson approximation to the convolution of logarithmic series distributions, which has not been studied so far in the literature.

The article is organized as follows. In Section \ref{Pre}, we discus some known results for PSD's and Stein's method. In Section \ref{PA}, we derive the error bounds between Poisson and the convolution of PSD's, and discuss some relevant consequences. In Section \ref{CPNB}, we present the results for negative binomial approximation to PSD obtained from Vellaisamy {\em et al.} \cite{VUC}. Finally, we give a numerical comparison between Poisson and negative binomial approximation to the PSD's. 

\section{Preliminaries}\label{Pre}
First we introduce the notation and briefly discuss the Stein's method. Let ${\mathbb Z}_+=\{0,1,2,\dotsc\}$, the set of non-negative integers, and $Z$ be a random variable (rv) with
\begin{equation}
{\mathbb P}(Z=k)=\frac{a_k \theta^k}{h(\theta)}, ~k \in {\mathbb Z}_+,~ \theta >0,
\end{equation}
where $a_k\ge 0$ and  $h(\theta)=\sum_{k=0}^{\infty}a_k \theta^k$. Then we say $Z$ belongs to the class of PSD's corresponding to $h(\theta)$. It can be easily verified that the Bernoulli, binomial, geometric, negative binomial and logarithmic series distributions, among many others, belong to the class of PSDs. For more details, we refer the reader to Johnson {\em et al.} \cite{JKK2005}. Through out the paper,  Poi($\lambda$) and Geo($p$) denote
respectively the Poisson and geometric distributions. \\
Next, we describe briefly Stein's method to obtain the error bound, under the total variation norm, between two discrete distributions. Stein's method involves mainly the following three steps: 
\begin{enumerate}
\item Let ${\cal G}=\{g:{\mathbb Z}_+\to{\mathbb R}|~g~\text{is bounded}\}$  be a class of bounded functions on $ {\mathbb R}.$ For a ${\mathbb Z}_+$-valued rv $Y$, define ${\cal G}_Y=\{g\in {\cal G}|~g(0)=0~\text{and}~g(x)=0,~\text{for}~x\notin \text{Supp}(Y)\}$. An operator ${\cal A}_Y$ defined by
\begin{equation}
{\mathbb E}[{\cal A}_Y g(Y)]=0,~\text{for}~g \in {\cal G}_Y,
\end{equation}
 is called a Stein operator for the rv $Y$.
\item Solve next equation
\begin{equation}
{\cal A}_Yg(k)=f(k)-{\mathbb E}f(Y),~f \in {\cal G}~\text{and}~g \in {\cal G}_Y,\label{seq}
\end{equation}
called the Stein equation.
\item Replace $k$ by a rv $Z$ in Stein equation \eqref{seq}, and take expectation and supremum to get
\begin{equation}
d_{TV}(Z,Y):=\sup_{f \in {\cal H}}|{\mathbb E}f(Z)-{\mathbb E}f(Y)|=\sup_{f \in {\cal H}}|{\mathbb E}{\cal A}_Yg(Z)|,\label{power}
\end{equation}
where ${\cal H}=\{I(A)| A \subseteq {\mathbb Z}\}$ and $I(A)$ is the indicator function of $A$ and 
$d_{TV}(Z,Y) $ is  the total variation distance
between the distributions of $Z$ and $Y.$ 
\end{enumerate}

\noindent Let now $X\sim$ Poi($\lambda$), the Poisson distribution, with probability mass function (pmf)
\begin{equation}
{\mathbb P}(X=k)=\frac{e^{-\lambda}\lambda^k}{k!},~k\in{\mathbb Z}_+,\label{popmf}
\end{equation}
for some $\lambda >0$. The Stein operator for $X$ is given by (see Barbour {\em et al.} \cite{BHJ})
\begin{equation}
{\cal A}g(k)=\lambda g(k+1)-k g(k), ~k\in{\mathbb Z}_+.\label{pooper}
\end{equation}
Also, the bounds for the solution of Stein equation \eqref{seq} are given by
\begin{equation}
\|g\|\le \frac{1}{\max(1,\sqrt{\lambda})}\quad\text{and}\quad\|\Delta g\|\le \frac{2 \|f\|}{\max(1,\lambda)}, \label{bound}
\end{equation}
where $\Delta g(k)=g(k+1)-g(k)$ and $\|\Delta g\|=\sup_{k}|g(k+1)-g(k)|$. Note that $2 \|f\|$ can be replaced by $1$ (see Upadhye {\em et al.} \cite{UCV}). For more details, we refer the reader to Barbour {\em et al.} \cite{BHJ} and Upadhye {\em et al.} \cite{UCV}.

\section{Poisson Approximation to PSD's}\label{PA}
We first consider the case of independent but non-identical PSD's. Let $X_{i,n}$, $1 \le i\le n$, $n\ge 1$, be a double array  of independent rvs having PSD with probability mass function (pmf)
\begin{align}
{\mathbb P}(X_{i,n}=k)=p_{i,n}(k)=\frac{a_k \theta_{i,n}^k}{h(\theta_{i,n})},~k\in{\mathbb Z}_+,\label{psdpmf}
\end{align}
where $a_k\ge 0$, $\theta_{i,n}>0$, and $h(\theta_{i,n})=\displaystyle{\sum_{k=0}^{\infty}}a_k \theta_{i,n}^k$, $1 \le i \le n$. We call, for simplicity, $X_{i,n}$ power series rvs and the distribution in \eqref{psdpmf} the PSD associated with the function $h$. We assume $h$ is differentiable. Note that
\begin{equation}
{\mathbb E}X_{i,n}=\sum_{k=1}^{\infty}kp_{i,n}(k)=\frac{1}{h(\theta_{i,n})}\sum_{k=1}^{\infty}k a_k\theta_{i,n}^k=\frac{\theta_{i,n}h^\prime(\theta_{i,n})}{h(\theta_{i,n})}.\label{psdmean}
\end{equation}
Since $h^\prime(\theta_{i,n})=\sum_{k=1}^{\infty}k a_k \theta_{i,n}^{k-1}=\sum_{k=0}^{\infty}(k+1)a_{k+1}\theta_{i,n}^k$, we have

\begin{equation}
 \sum_{k=0}^{\infty}\frac{(k+1)a_{k+1}\theta_{i,n}^k}{h^\prime(\theta_{i,n})}=1.\label{hprime}
\end{equation}
Let $X_{i,n}^*$, $1 \le i\le n$, $n\ge 1$, be a sequence of independent power series rvs corresponding to $h^\prime$ so that pmf of $X_{i,n}^*$ is given by
\begin{equation}
p_{i,n}^*(k)={\mathbb P}(X_{i,n}^*=k)=\frac{(k+1)a_{k+1}\theta_{i,n}^k}{h^\prime(\theta_{i,n})},~k\in {\mathbb Z}_+.\label{primepsdpmf}
\end{equation}
For $n \geq 1,$ let $S_n=\sum_{i=1}^{n}X_{i,n}, n \geq 1,$
denote the sequence of the partial sums of the independent power series rvs. Now, our main interest in this article is to study Poisson approximation to $S_n$, and obtain error bounds. Our results unifies several known results obtained for specific distributions, such as binomial and geometric, belonging to PSD's. Let $N_\lambda$ henceforth denote the Poisson rv with mean $\lambda>0$. 

\begin{theorem}\label{th11}
Let $X_{i,n}$, $1 \le i\le n$, $n\ge 1$, be a double array of independent power series rvs defined in \eqref{psdpmf}, and $S_n=\sum_{i=1}^{n}X_{i,n}$. Then
\begin{equation}
d_{TV}(S_n, N_{\lambda})\le \frac{|\lambda - {\mathbb E}S_n|}{\max(1,\sqrt{\lambda})}+\frac{1}{\max(1,\lambda)}\sum_{i=1}^{n}{\mathbb E}X_{i,n}\sum_{k=1}^{\infty}k|p_{i,n}(k)-p_{i,n}^*(k)|,\label{mainboundd}
\end{equation}
where $p_{i,n}(\cdot)$ and $p_{i,n}^*(\cdot)$ are the PSD's given in \eqref{psdpmf} and \eqref{primepsdpmf}, respectively. 
\end{theorem}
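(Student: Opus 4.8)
The plan is to run the standard Stein's-method pipeline for the Poisson operator ${\cal A}g(k)=\lambda g(k+1)-kg(k)$ from \eqref{pooper}. Starting from the characterization \eqref{power}, it suffices to bound $|{\mathbb E}{\cal A}g(S_n)|=|{\mathbb E}[\lambda g(S_n+1)-S_n g(S_n)]|$ uniformly over the solutions $g$ of the Stein equation \eqref{seq} associated with $f\in{\cal H}$. First I would exploit independence by writing $S_n=X_{i,n}+S_n^{(i)}$ with $S_n^{(i)}=\sum_{j\ne i}X_{j,n}$ independent of $X_{i,n}$, so that ${\mathbb E}[S_n g(S_n)]=\sum_{i=1}^n{\mathbb E}[X_{i,n}\,g(X_{i,n}+S_n^{(i)})]$.

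The structural heart of the argument is a size-biasing identity linking the two families \eqref{psdpmf} and \eqref{primepsdpmf}. Using the mean formula \eqref{psdmean} together with the definition of $p_{i,n}^*$, a short computation gives $(k+1)\,p_{i,n}(k+1)={\mathbb E}X_{i,n}\cdot p_{i,n}^*(k)$ for all $k\in{\mathbb Z}_+$; equivalently, $X_{i,n}^*+1$ is the size-biased version of $X_{i,n}$. Shifting the summation index in ${\mathbb E}[X_{i,n}\,g(X_{i,n}+S_n^{(i)})]=\sum_{k\ge1}k\,p_{i,n}(k)\,{\mathbb E}[g(k+S_n^{(i)})]$ and applying this identity converts each term into ${\mathbb E}X_{i,n}\cdot{\mathbb E}[g(X_{i,n}^*+1+S_n^{(i)})]$, where $X_{i,n}^*$ is taken independent of $S_n^{(i)}$.

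Next I would split the leading coefficient as $\lambda=(\lambda-{\mathbb E}S_n)+\sum_{i=1}^n{\mathbb E}X_{i,n}$. The piece $(\lambda-{\mathbb E}S_n){\mathbb E}[g(S_n+1)]$ is bounded directly by $|\lambda-{\mathbb E}S_n|\,\|g\|$, which yields the first term of \eqref{mainboundd} via $\|g\|\le 1/\max(1,\sqrt{\lambda})$ from \eqref{bound}. The remaining piece is $\sum_{i=1}^n{\mathbb E}X_{i,n}\,{\mathbb E}\big[g(X_{i,n}+S_n^{(i)}+1)-g(X_{i,n}^*+S_n^{(i)}+1)\big]$, and here I would freeze the independent value $m=S_n^{(i)}+1$ and telescope: writing $g(k+m)-g(m)=\sum_{l=0}^{k-1}\Delta g(m+l)$ and noting that the $g(m)$ contribution cancels because $\sum_k p_{i,n}(k)=\sum_k p_{i,n}^*(k)=1$, one obtains $\sum_{k\ge1}(p_{i,n}(k)-p_{i,n}^*(k))\sum_{l=0}^{k-1}\Delta g(m+l)$, whose modulus is at most $\|\Delta g\|\sum_{k\ge1}k\,|p_{i,n}(k)-p_{i,n}^*(k)|$. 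Inserting $\|\Delta g\|\le 1/\max(1,\lambda)$ from \eqref{bound} (using the improvement that $2\|f\|$ may be replaced by $1$) and summing over $i$ produces exactly the second term of \eqref{mainboundd}.

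I expect the main obstacle to be the size-biasing identity $(k+1)p_{i,n}(k+1)={\mathbb E}X_{i,n}\,p_{i,n}^*(k)$: it is the one place where the power-series structure, namely the link between $h$ and $h'$, is genuinely used, and getting the index shift and the independent coupling of $X_{i,n}^*$ with $S_n^{(i)}$ right is what makes the whole decomposition go through. The telescoping step is technically routine but relies on the cancellation of the constant term, which is precisely why it is essential that both $p_{i,n}$ and $p_{i,n}^*$ are genuine probability mass functions summing to one.
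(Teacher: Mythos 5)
Your proposal is correct and follows essentially the same route as the paper's proof: the same split $\lambda=(\lambda-{\mathbb E}S_n)+\sum_{i=1}^{n}{\mathbb E}X_{i,n}$ handled by $\|g\|$, the same use of independence via $W_{i,n}=S_n-X_{i,n}$, the same telescoping of $g$ into $\Delta g$ increments, the same key identity $(k+1)p_{i,n}(k+1)={\mathbb E}X_{i,n}\,p_{i,n}^*(k)$ as in \eqref{last}, and the same norm bounds \eqref{bound} with $2\|f\|$ replaced by $1$. The only difference is organizational: you invoke the identity up front in size-biasing language and obtain the cancellation from $\sum_{k}p_{i,n}(k)=\sum_{k}p_{i,n}^*(k)=1$, whereas the paper anchors both expansions at $g(W_{i,n}+1)$ and applies \eqref{last} only at the last step---the same computation in a different order.
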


\begin{proof}
Replacing $k$ by $S_n$ in \eqref{pooper} and taking expectation, we have
$${\mathbb E}[{\cal A}g(S_n)]=\lambda {\mathbb E}[g(S_n+1)]-{\mathbb E}[S_n g(S_n)].$$
Adding and subtracting ${\mathbb E}S_n {\mathbb E}[g(S_n+1)]$, we get
\begin{align}
{\mathbb E}[{\cal A}g(S_n)]&=(\lambda-{\mathbb E}S_n){\mathbb E}[g(S_n+1)]+{\mathbb E}S_n {\mathbb E}[g(S_n+1)]-{\mathbb E}[S_n g(S_n)]\nonumber\\
&=(\lambda-{\mathbb E}S_n){\mathbb E}[g(S_n+1)]+\sum_{i=1}^{n}{\mathbb E}X_{i,n} {\mathbb E}[g(S_n+1)]-\sum_{i=1}^{n}{\mathbb E}[X_{i,n} g(S_n)].\label{lambda}
\end{align}
Let now
$W_{i,n}=S_n-X_{i,n}$
so that $W_{i,n}$ and $X_{i,n}$ are independent. Adding and subtracting $\sum_{i=1}^{n}{\mathbb E}(X_{i,n}$ $g(W_{i,n}+1))$ in \eqref{lambda}, we have
\begin{align}
{\mathbb E}[{\cal A}g(S_n)]=&(\lambda-{\mathbb E}S_n){\mathbb E}[g(S_n+1)]+\sum_{i=1}^{n}{\mathbb E}X_{i,n} {\mathbb E}[g(S_n+1)-g(W_{i,n}+1)]\nonumber\\
&-\sum_{i=1}^{n}{\mathbb E}[X_{i,n} (g(S_n)-g(W_{i,n}+1))].\label{big}
\end{align}
First, consider the second term from \eqref{big}
\begin{align*}
{\mathbb E}[g(S_n+1)-g(W_{i,n}+1)]&={\mathbb E}[g(W_{i,n}+X_{i,n}+1)-g(W_{i,n}+1)] \\
&=\sum_{k=0}^{\infty}{\mathbb E}[g(W_{i,n}+k+1)-g(W_{i,n}+1)]p_{i,n}(k),
\end{align*}
since $W_{i,n}$ and $X_{i,n}$ are independent. 
Note that
\begin{align}
g(W_{i,n}+k+1)-g(W_{i,n}+1)&=\sum_{j=1}^{k}\Delta g(W_{i,n}+j).\label{delta}
\end{align}
Therefore,
\begin{equation}
{\mathbb E}[g(S_n+1)-g(W_{i,n}+1)]=\sum_{k=1}^{\infty}\sum_{j=1}^{k}{\mathbb E}[\Delta g(W_{i,n}+k)]p_{i,n}(k).\label{1}
\end{equation}
Next, consider the third term of \eqref{big}
\begin{align}
{\mathbb E}[X_{i,n}(g(S_n)-g(W_{i,n}+1))]&={\mathbb E}[X_{i,n}(g(W_{i,n}+X_{i,n})-g(W_{i,n}+1))\nonumber]\\
&=\sum_{k=0}^{\infty}k {\mathbb E}[g(W_{i,n}+k)-g(W_{i,n}+1)]p_{i,n}(k)\nonumber\\
&=\sum_{k=2}^{\infty}\sum_{j=1}^{k-1}k{\mathbb E}[\Delta g(W_{i,n}+j]p_{i,n}(k)\quad\text{(using \eqref{delta})}\nonumber\\
&=\sum_{k=1}^{\infty}\sum_{j=1}^{k}(k+1) {\mathbb E}[\Delta g(W_{i,n}+j)]p_{i,n}(k+1).\label{3}
\end{align}
Substituting \eqref{1} and \eqref{3} in \eqref{big}, we get
\begin{align*}
{\mathbb E}[{\cal A}g(S_n)]&=(\lambda-{\mathbb E}S_n){\mathbb E}[g(S_n+1)]+\sum_{i=1}^{n}{\mathbb E}X_{i,n}\sum_{k=1}^{\infty}\sum_{j=1}^{k}{\mathbb E}[\Delta g(W_{i,n}+j)]p_{i,n}(k)\\
&~~~-\sum_{i=1}^{n}\sum_{k=1}^{\infty}\sum_{j=1}^{k}(k+1){\mathbb E}[\Delta g(W_{i,n}+j)]p_{i,n}(k+1)\\
&=(\lambda-{\mathbb E}S_n){\mathbb E}[g(S_n+1)]\\
&~~~+\sum_{i=1}^{n}\sum_{k=1}^{\infty}{\mathbb E}X_{i,n}\left[p_{i,n}(k)-\frac{(k+1)p_{i,n}(k+1)}{{\mathbb E}X_{i,n}}\right]\sum_{j=1}^{k}{\mathbb E}[\Delta g(W_{i,n}+j)].
\end{align*}
Therefore,
\begin{equation}
|{\mathbb E}[{\cal A}g(S_n)]|\le |\lambda-{\mathbb E}S_n|~\|g\|+\|\Delta g\|\sum_{i=1}^{n}\sum_{k=1}^{\infty}k{\mathbb E}X_{i,n}\left|p_{i,n}(k)-\frac{(k+1)p_{i,n}(k+1)}{{\mathbb E}X_{i,n}}\right|.\label{bbb5}
\end{equation}
Using \eqref{power}, \eqref{bound}, and \eqref{bbb5}, we get
\begin{align}
d_{TV}(S_n, N_{\lambda})&\le \frac{|\lambda-{\mathbb E}S_n|}{\max(1,\sqrt{\lambda})}+\frac{1}{\max(1,\lambda)}\sum_{i=1}^{n}{\mathbb E}X_{i,n}\sum_{k=1}^{\infty}k\left|p_{i,n}(k)-\frac{(k+1)p_{i,n}(k+1)}{{\mathbb E}X_{i,n}}\right|.\label{5}
\end{align}
From \eqref{psdpmf} and \eqref{psdmean}, we have
\begin{equation}
\frac{(k+1)p_{i,n}(k+1)}{{\mathbb E}X_{i,n}}=\frac{(k+1)a_{k+1}\theta_{i,n}^k}{h^\prime(\theta_{i,n})}=p_{i,n}^*(k).\label{last}
\end{equation}
Substituting \eqref{last} in \eqref{5}, we get the required result.
\end{proof}

\begin{corollary}\label{th1}
Let $X_{i,n}$, $1 \le i\le n$, $n \ge 1$, be defined as in Theorem \ref{th11}, and choose $\lambda={\mathbb E}S_n=\sum_{i=1}^{n}{\mathbb E}X_{i,n}=\sum_{i=1}^{n}{\theta_{i,n} h^\prime(\theta_{i,n})}/{h(\theta_{i,n})}$ so that the first moment of $N_{\lambda}$ and $S_n$ match. Then
\begin{equation}
d_{TV}(S_n, N_{\lambda})\le \frac{1}{\max(1,\lambda)}\sum_{i=1}^{n}{\mathbb E}X_{i,n}\sum_{k=1}^{\infty}k|p_{i,n}(k)-p_{i,n}^*(k)|.\label{mainbound}
\end{equation}
\end{corollary}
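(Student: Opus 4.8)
The plan is to derive Corollary~\ref{th1} as an immediate specialization of Theorem~\ref{th11}, since the corollary simply sets the free parameter $\lambda$ equal to the first moment $\mathbb{E}S_n$. First I would recall the general bound \eqref{mainboundd} established in Theorem~\ref{th11}, namely
\begin{equation*}
d_{TV}(S_n, N_{\lambda})\le \frac{|\lambda - \mathbb{E}S_n|}{\max(1,\sqrt{\lambda})}+\frac{1}{\max(1,\lambda)}\sum_{i=1}^{n}\mathbb{E}X_{i,n}\sum_{k=1}^{\infty}k|p_{i,n}(k)-p_{i,n}^*(k)|.
\end{equation*}

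Next I would make the choice $\lambda=\mathbb{E}S_n$. By linearity of expectation and the independence structure, $\mathbb{E}S_n=\sum_{i=1}^{n}\mathbb{E}X_{i,n}$, and by the mean formula \eqref{psdmean} each term equals $\theta_{i,n}h^\prime(\theta_{i,n})/h(\theta_{i,n})$, which justifies the stated expression for $\lambda$. With this choice the numerator of the first summand on the right-hand side satisfies $|\lambda-\mathbb{E}S_n|=0$, so the entire first term vanishes regardless of the value of $\max(1,\sqrt{\lambda})$.

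What remains is the second summand, which is unaffected by the choice of $\lambda$ beyond the substitution $\lambda=\mathbb{E}S_n$ inside the factor $1/\max(1,\lambda)$. This yields exactly the bound \eqref{mainbound} claimed in the corollary, completing the argument.

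I do not anticipate any genuine obstacle here, as the result is a direct corollary: the only point requiring a word of care is confirming that the first-moment matching does indeed annihilate the leading term, and that the surviving factor $1/\max(1,\lambda)$ is well defined and finite since $\lambda=\mathbb{E}S_n>0$ whenever the $X_{i,n}$ are not almost surely zero. The heavy lifting—solving the Stein equation, bounding $\|g\|$ and $\|\Delta g\|$, and performing the telescoping sums in \eqref{1} and \eqref{3}—has already been carried out in the proof of Theorem~\ref{th11}, so nothing beyond the substitution is needed.
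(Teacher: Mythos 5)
Your proof is correct and matches the paper's (implicit) argument exactly: Corollary~\ref{th1} is obtained by substituting $\lambda={\mathbb E}S_n$ into the bound \eqref{mainboundd} of Theorem~\ref{th11}, which annihilates the first term. Nothing further is needed.
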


\noindent
Next we discuss some applications of Theorem \ref{th11} and Corollary \ref{th1}.
\begin{example} {\em
Let $X_{i,n}\sim$ Poi($\lambda_{i.n}$), $1 \le i\le n$ and $n \geq 1$ so that $a_k=1/k!$, $h(\lambda_{i,n})=e^{\lambda_{i,n}}$,
$$p_{i,n}(k)=\frac{e^{-\lambda_{i,n}}\lambda_{i,n}^k}{k!};\quad p_{i,n}^*(k)=\frac{(k+1)a_{k+1}\lambda_{i,n}^k}{h^\prime(\lambda_{i,n})}=\frac{e^{-\lambda_{i,n}}\lambda_{i,n}^k}{k!}=p_{i,n}(k),~k\in{\mathbb Z}_+.$$
Then $S_n=\sum_{i=1}^{n}X_{i,n}\sim \text{Poi}(\lambda_n)$, where $\lambda_n=\sum_{i=1}^{n}\lambda_{i,n}$. From \eqref{mainboundd}, we have
\begin{equation}
d_{TV}(N_{\lambda_n}, N_{\lambda})\le \frac{|\lambda-{\mathbb E}S_n|}{\max(1,\sqrt{\lambda})}=\frac{|\lambda-\lambda_n|}{\max(1,\sqrt{\lambda})}.
\end{equation}
Also, if $\lambda_n\to \lambda$, as $n \to \infty$, then $S_n\stackrel{\cal L}{\to}N_\lambda$.}
\end{example}

\begin{example}\label{ex34} {\em 
Let $X_{i,n}\sim$ Ber($p_{i,n}$), $1 \le i\le n$ so that $p_{i,n}(k)=(1-p_{i,n})^{1-k}p_{i,n}^k$, for $k=0,1$. In that case,  $p_{i,n}^*(k)=1$ for $k=0$ and zero  otherwise. Also, $a_k=1$, for $k=0,1$, and $a_k=0$ for $k \ge 2$, $h(\theta_{i,n})=1+\theta_{i,n}$, where $\theta_{i,n}=p_{i,n}/(1-p_{i,n})$, and ${\mathbb E}X_{i,n}=p_{i,n}$. Then, from \eqref{mainboundd}, we have
$$d_{TV}(S_n, N_{\lambda})\le \frac{|\lambda-\sum_{i=1}^{n}p_{i,n}|}{\max(1,\sqrt{\lambda})}+\frac{\sum_{i=1}^{n}p_{i,n}^2}{\max\left(1,\lambda\right)}.$$
Observe that $d_{TV}(S_n, N_{\lambda})\to 0$ if $\sum_{i=1}^{n}p_{i,n}\to \lambda$, and $\sum_{i=1}^{n}p_{i,n}^2\to 0$, as $n \to \infty$, as proved in Theorem 3 of Wang \cite{WANG1993}. Further, if $\lambda=\sum_{i=1}^{n}p_{i,n}$ then
\begin{align}
d_{TV}(S_n,N_{\lambda})&\le \frac{\sum_{i=1}^{n}p_{i,n}^2}{\max\left(1,\lambda\right)}.\label{ber}
\end{align}
Poisson approximation to the sum of independent Bernoulli rvs has been studied by several authors and some bounds are given below:
\begin{itemize}
\item[(i)] $d_{TV}(S_n,N_{\lambda})\le \sum_{i=1}^{n}p_{i,n}^2$ \quad (Le Cam \cite{CAM})
\item[(ii)] $d_{TV}(S_n,N_{\lambda})\le 1.05 \lambda^{-1} \sum_{i=1}^{n}p_{i,n}^2$ \quad (Kerstan \cite{KJ})
\item[(iii)] $d_{TV}(S_n,N_{\lambda})\le (1-e^{-\lambda}) \lambda^{-1}\sum_{i=1}^{n}p_{i,n}^2$ \quad (Barbour and Hall \cite{BH})
\end{itemize} 
Note that we have used the bound for $\|\Delta g\|$ given in \eqref{bound} to obtain the bound in \eqref{ber}. We will get the Barbour and Hall \cite{BH} bound in (iii) if we use instead the bound $\|\Delta g\|\le (1-e^{-\lambda})/\lambda$ (see (2.6) of Barbour and Hall \cite{BH}).}
\end{example}

\begin{example}\label{exx1} {\em 
	Let $X_{i,n}\sim$ Geo($p_{i,n}$), $1 \le i\le n$. Then $a_k=1$, for all $k \in {\mathbb Z}_+$, $h(\theta_{i,n})=(1-\theta_{i,n})^{-1}$, where $\theta_{i,n}=q_{i,n}=1-p_{i,n}$. Note ${\mathbb E}X_{i,n}=q_{i,n}/p_{i,n}$ and ${\mathbb E}X_{i,n}^*=2q_{i,n}/p_{i,n}$. 
	Also, when $q_{i,n}\le 1/2$, $p_{i,n}^*(k)\ge p_{i,n}(k)$, $k \in {\mathbb Z}_+$. Therefore, from \eqref{mainbound}, we have for $\lambda =\sum_{i=1}^{n}\frac{q_{i,n}}{p_{i,n}}$,
	\begin{align} 
	d_{TV}(S_n, N_{\lambda})&\le\frac{1}{\max(1,\lambda)}\sum_{i=1}^{n}{\mathbb E}X_{i,n}\sum_{k=1}^{\infty}k |p_{i,n}^*(k)-p_{i,n}(k)|\nonumber\\
	&=\frac{1}{\max(1,\lambda)}\sum_{i=1}^{n}\frac{q_{i,n}}{p_{i,n}}\left[\frac{2 q_{i,n}}{p_{i,n}}-\frac{q_{i,n}}{p_{i,n}}\right]=\frac{\sum_{i=1}^{n}\left(\frac{q_{i,n}}{p_{i,n}}\right)^2}{\max\left(1,\lambda\right)}.\label{eqn}
	\end{align}
	Note that if $\max_{1 \le i \le n}q_{i,n}\to 0$, as $n \to \infty$, then $S_{n} \stackrel{\cal L}{\to} N_\lambda$.\\
	Poisson approximation to the sum of independent (identical or non-identical) geometric rvs has been studied by several authors and the bounds are given below:
	\begin{itemize}
		\item[(i)] $d_{TV}(S_n,N_{\lambda})\le (1-e^{-\lambda})\frac{q}{p}$,~~(Barbour \cite{BAD})
		\item[(ii)] $d_{TV}(S_n,N_{\lambda})\le \sum_{i=1}^{n}\frac{q_{i,n}^2}{p_{i,n}^2}\min\left\{1,\frac{1}{\sqrt{2 \lambda e}}\right\}$,~~(Vellaisamy and Upadhye \cite{VPUNS})
		\item[(iii)] $|{\mathbb P}(S_n=k)-{\mathbb P}(X=k)|\le 2 \sum_{i=1}^{n}\left[(1-p_{i,n})^2+\frac{1-p_{i,n}}{p_{i,n}^2}\right]$, ~~(Hung and Giang \cite{HLGT})
		\item[(iv)] $d_{TV}(S_n,N_{\lambda})\le \sum_{i=1}^{n}\min\Big\{\frac{\lambda^{-1}(1-e^{-\lambda})}{p_{i,n}},1\Big\}q_{i,n}^2p_{i,n}^{-1}$, ~~(Teerapabolarn and Wongkasem \cite{TKWP})
	\end{itemize} 
	Observe that the bound given in \eqref{eqn} is comparable to the bound given in (iv) which is an improvement over other bounds. }
\end{example}

\noindent
We next obtain a rather crude bound which would be useful in applications.
\begin{theorem}\label{th2}
Let $X_{i,n}$, $1 \le i\le n$, $n \ge 1$, be defined as in \eqref{psdpmf}, where
$h$ is assumed to be twice differentiable. Let $M_n=\sup_{\theta_{i,n}}[h^\prime(\theta_{i,n})^2+h^{\prime\prime}(\theta_{i,n})h(\theta_{i,n})]$. If $0<M_n<\infty$, then
\begin{equation}
d_{TV}(S_n, N_{\lambda})\le \frac{|\lambda-{\mathbb E}S_n|}{\max(1,\sqrt{\lambda})}+\frac{M_n}{a_0^2  \max(1,\lambda)}\sum_{i=1}^{n}\theta_{i,n}^2.\label{muchcrudebound}
\end{equation}
\end{theorem}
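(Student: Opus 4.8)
The plan is to start from the bound \eqref{mainboundd} of Theorem \ref{th11} and replace its inner sum $\sum_{k=1}^{\infty}k\,|p_{i,n}(k)-p_{i,n}^*(k)|$ by a cruder but closed-form quantity; this is exactly the trade-off the statement advertises. Since only the second term of \eqref{mainboundd} involves $p_{i,n}^*$, I would simply apply the triangle inequality $|p_{i,n}(k)-p_{i,n}^*(k)|\le p_{i,n}(k)+p_{i,n}^*(k)$. Because the $k=0$ terms drop out, the inner sum is then bounded by $\sum_{k=1}^{\infty}k\,p_{i,n}(k)+\sum_{k=1}^{\infty}k\,p_{i,n}^*(k)={\mathbb E}X_{i,n}+{\mathbb E}X_{i,n}^*$.

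The next step is to express ${\mathbb E}X_{i,n}^*$ through $h$. I already have ${\mathbb E}X_{i,n}=\theta_{i,n}h'(\theta_{i,n})/h(\theta_{i,n})$ from \eqref{psdmean}. For the starred variable, differentiating $h'(\theta)=\sum_{k=0}^{\infty}(k+1)a_{k+1}\theta^{k}$ once more and multiplying by $\theta$ gives $\theta h''(\theta)=\sum_{k=0}^{\infty}k(k+1)a_{k+1}\theta^{k}$, so that from the pmf \eqref{primepsdpmf} one obtains ${\mathbb E}X_{i,n}^*=\theta_{i,n}h''(\theta_{i,n})/h'(\theta_{i,n})$. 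This is the one place where the twice-differentiability hypothesis on $h$ is genuinely needed.

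Combining the two, the $i$-th summand of the second term of \eqref{mainboundd} is bounded by
\[
{\mathbb E}X_{i,n}\bigl({\mathbb E}X_{i,n}+{\mathbb E}X_{i,n}^*\bigr)
=\frac{\theta_{i,n}h'(\theta_{i,n})}{h(\theta_{i,n})}\cdot\theta_{i,n}\,\frac{h'(\theta_{i,n})^2+h''(\theta_{i,n})h(\theta_{i,n})}{h(\theta_{i,n})h'(\theta_{i,n})}
=\frac{\theta_{i,n}^{2}\,[\,h'(\theta_{i,n})^2+h''(\theta_{i,n})h(\theta_{i,n})\,]}{h(\theta_{i,n})^2}.
\]
I would then bound the bracketed numerator by $M_n$ via its definition, and bound the denominator from below by $a_0^2$ using $h(\theta_{i,n})=\sum_{k\ge 0}a_k\theta_{i,n}^k\ge a_0$ (all terms nonnegative, $\theta_{i,n}>0$). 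Summing over $i$ and substituting into \eqref{mainboundd} yields \eqref{muchcrudebound}.

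There is no real obstacle here, only two points to get right. First, the reindexing that produces $\theta h''(\theta)=\sum_{k\ge 0}k(k+1)a_{k+1}\theta^{k}$ must be handled carefully so that ${\mathbb E}X_{i,n}^*=\theta_{i,n}h''(\theta_{i,n})/h'(\theta_{i,n})$ holds exactly. Second, the lower bound $h\ge a_0$ makes the factor $a_0^{-2}$ informative precisely when $a_0>0$, which is consistent with the hypothesis $0<M_n<\infty$ and with $p_{i,n}^*$ being a genuine pmf. The triangle-inequality step is what makes the estimate ``crude,'' and no sharper manipulation of the difference $p_{i,n}-p_{i,n}^*$ is attempted.
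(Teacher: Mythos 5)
Your proposal is correct and follows essentially the same route as the paper: triangle inequality on $|p_{i,n}(k)-p_{i,n}^*(k)|$ to reduce the inner sum to ${\mathbb E}X_{i,n}+{\mathbb E}X_{i,n}^*$, the identity ${\mathbb E}X_{i,n}^*=\theta_{i,n}h''(\theta_{i,n})/h'(\theta_{i,n})$, the algebraic simplification to $\theta_{i,n}^2[h'(\theta_{i,n})^2+h''(\theta_{i,n})h(\theta_{i,n})]/h(\theta_{i,n})^2$, and finally the bounds by $M_n$ and $h(\theta_{i,n})\ge a_0$. All steps, including the reindexing for ${\mathbb E}X_{i,n}^*$, check out exactly as in the paper's proof.
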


\begin{proof}
Note that the bound given in \eqref{mainboundd} can also be bounded by
\begin{align}
d_{TV}(S_n, N_{\lambda})&\le \frac{|\lambda-{\mathbb E}S_n|}{\max(1,\sqrt{\lambda})}+\frac{1}{\max(1,\lambda)}\sum_{i=1}^{n}{\mathbb E}X_{i,n}\left[{\mathbb E}X_{i,n}+{\mathbb E}X_{i,n}^*\right].\label{crudebound}
\end{align}
Observe that
\begin{align}
{\mathbb E}X_{i,n}^*&=\sum_{k=1}^{\infty}k p_{i,n}^*(k)=\sum_{k=1}^{\infty}\frac{k(k+1)a_{k+1}\theta_{i,n}^k}{h^\prime(\theta_{i,n})}=\frac{\theta_{i,n}}{h^\prime(\theta_{i,n})}\sum_{k=1}^{\infty}k(k-1)a_{k}\theta_{i,n}^{k-2}=\frac{\theta_{i,n}h^{\prime\prime}(\theta_{i,n})}{h^\prime(\theta_{i,n})}.\label{primepsdmean}
\end{align}
Using \eqref{psdmean} and \eqref{primepsdmean} in \eqref{crudebound}, we get
\begin{align}
d_{TV}(S_n, N_{\lambda})&\hspace{-0.05cm}\le\hspace{-0.05cm} \frac{|\lambda-{\mathbb E}S_n|}{\max(1,\sqrt{\lambda})}\hspace{-0.05cm}+\hspace{-0.05cm}\frac{1}{\max(1,\lambda)}\sum_{i=1}^{n}\frac{\theta_{i,n}h^{\prime}(\theta_{i,n})}{h(\theta_{i,n})}\left[\frac{\theta_{i,n}h^{\prime}(\theta_{i,n})}{h(\theta_{i,n})}\hspace{-0.05cm}+\hspace{-0.05cm}\frac{\theta_{i,n}h^{\prime\prime}(\theta_{i,n})}{h^\prime(\theta_{i,n})}\right]\nonumber\\
&=\hspace{-0.05cm}\frac{|\lambda-{\mathbb E}S_n|}{\max(1,\sqrt{\lambda})}\hspace{-0.05cm}+\hspace{-0.05cm}\frac{1}{\max(1,\lambda)}\sum_{i=1}^{n}\left(\frac{\theta_{i,n}}{h(\theta_{i,n})}\right)^2[h^\prime(\theta_{i,n})^2\hspace{-0.05cm}+\hspace{-0.05cm}h^{\prime\prime}(\theta_{i,n})h(\theta_{i,n})]\nonumber\\
&\le\hspace{-0.05cm}\frac{|\lambda-{\mathbb E}S_n|}{\max(1,\sqrt{\lambda})}\hspace{-0.05cm}+\hspace{-0.05cm}\frac{M_n}{\max(1,\lambda)}\sum_{i=1}^{n}\left(\frac{\theta_{i,n}}{h(\theta_{i,n})}\right)^2.\label{crudebound1}
\end{align}
Further, note that
\begin{equation}
h(\theta_{i,n})=\sum_{k=0}^{\infty}a_k \theta_{i,n}^k\ge a_0 \implies \frac{1}{h(\theta_{i,n})^2}\le\frac{1}{a_0^2}.\label{a00}
\end{equation}
Using \eqref{a00} in \eqref{crudebound1}, the result follows.
\end{proof}

\noindent
First, we show that Theorem $3$ of P\'{e}rez-Abreu \cite{APV} follows as a corollary.
\begin{corollary}\label{cor}
Let $X_{i,n}$, $1 \le i\le n$, $n\ge 1$, be defined as in \eqref{psdpmf} with $a_0>0$ and $S_n=\sum_{i=1}^{n}X_{i,n}$. Also, assume
\begin{equation}
\theta_n^*=\max_{1 \le i \le n}\theta_{i,n} \to 0 \quad\text{and}\quad \sum_{i=1}^{n}\theta_{i,n}\to \lambda,~\text{as}~n \to \infty,\label{cond}
\end{equation}
for some $\lambda >0$. Then $S_n \stackrel{\cal L}{\to}N_{\lambda_0}$, where $\lambda_0=\lambda a_1/a_0$.
\end{corollary}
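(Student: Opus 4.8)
The plan is to invoke the crude bound of Theorem \ref{th2} with the Poisson parameter set equal to $\lambda_0 = \lambda a_1/a_0$, and then argue that both terms on the right of \eqref{muchcrudebound} tend to $0$. Since convergence in total variation implies convergence in law, it suffices to show $d_{TV}(S_n, N_{\lambda_0}) \to 0$. Note first that a power series is infinitely differentiable inside its radius of convergence, and since $\theta_n^* \to 0$ every $\theta_{i,n}$ eventually lies there, so the twice-differentiability hypothesis of Theorem \ref{th2} is automatic for large $n$.

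First I would treat the mean-matching term $|\lambda_0 - {\mathbb E}S_n|$. From \eqref{psdmean}, ${\mathbb E}S_n = \sum_{i=1}^{n}\theta_{i,n}h'(\theta_{i,n})/h(\theta_{i,n})$. Because $a_0 > 0$, the ratio admits the expansion $h'(\theta)/h(\theta) = a_1/a_0 + O(\theta)$ as $\theta \to 0$, with a remainder constant uniform on a fixed neighborhood of the origin. Writing ${\mathbb E}S_n - (a_1/a_0)\sum_{i=1}^{n}\theta_{i,n} = \sum_{i=1}^{n}\theta_{i,n}[h'(\theta_{i,n})/h(\theta_{i,n}) - a_1/a_0]$ and using that all $\theta_{i,n} \le \theta_n^* \to 0$, this difference is bounded by $C\sum_{i=1}^{n}\theta_{i,n}^2$. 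Combined with $\sum_{i=1}^{n}\theta_{i,n} \to \lambda$, this gives ${\mathbb E}S_n \to (a_1/a_0)\lambda = \lambda_0$, so the first term vanishes.

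Next I would bound the second term. The elementary estimate $\sum_{i=1}^{n}\theta_{i,n}^2 \le \theta_n^*\sum_{i=1}^{n}\theta_{i,n}$, together with $\theta_n^* \to 0$ and the boundedness of $\sum_{i=1}^{n}\theta_{i,n}$, yields $\sum_{i=1}^{n}\theta_{i,n}^2 \to 0$. It then remains to check that $M_n = \sup_{\theta_{i,n}}[h'(\theta_{i,n})^2 + h''(\theta_{i,n})h(\theta_{i,n})]$ stays bounded: as all $\theta_{i,n} \to 0$ and $h, h', h''$ are continuous at the origin with $h(0) = a_0$, $h'(0) = a_1$, $h''(0) = 2a_2$, the supremand approaches $a_1^2 + 2a_0 a_2$, so $M_n$ is bounded for large $n$. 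Hence $M_n (a_0^2 \max(1,\lambda_0))^{-1}\sum_{i=1}^{n}\theta_{i,n}^2 \to 0$ as well.

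The main obstacle is securing the uniformity of the expansion $h'/h = a_1/a_0 + O(\theta)$ across the entire triangular array, so that the $O(\theta)$ constant is independent of $i$ and $n$; this is exactly where the assumptions $a_0 > 0$ (keeping $h$ bounded away from zero) and $\theta_n^* \to 0$ (forcing every array entry into one small neighborhood of $0$) are used. Once the two terms in \eqref{muchcrudebound} are shown to vanish, the total variation bound forces $S_n \stackrel{\cal L}{\to} N_{\lambda_0}$, completing the argument.
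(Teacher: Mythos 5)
Your proposal is correct and follows essentially the same route as the paper's own proof: both apply the crude bound \eqref{muchcrudebound} of Theorem \ref{th2} with Poisson parameter $\lambda_0$, control the second term via $\sum_{i=1}^{n}\theta_{i,n}^2 \le \theta_n^*\sum_{i=1}^{n}\theta_{i,n} \to 0$ together with the boundedness of $M_n$ (the paper gets $h(\theta_{i,n})\to a_0$, $h'(\theta_{i,n})\to a_1$, $h''(\theta_{i,n})\to 2a_2$ from monotonicity, you from continuity at the origin --- the same fact), and show ${\mathbb E}S_n \to \lambda a_1/a_0 = \lambda_0$ to kill the mean-matching term. Your explicit uniform $h'(\theta)/h(\theta) = a_1/a_0 + O(\theta)$ expansion just fills in a detail the paper leaves implicit in \eqref{sccond}, and your remark that twice-differentiability of $h$ is automatic near $0$ is a harmless addition.
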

\begin{proof}
It suffices to show 
$d_{TV}(S_n, N_{\lambda_0})\to 0, ~\text{as}~ n \to \infty.$
First, we show that $0<M_n<\infty$. Since $h$ is an increasing function and $\theta_n^* \to 0$, as $n \to \infty$, we have $a_0\le h(\theta_{i,n})\le h(\theta_n^*)\to a_0$, showing that $h(\theta_{i,n})\to a_0$, as $\theta_n^*\to 0$. Using the similar argument, since $h^\prime$ and $h^{\prime\prime}$ are also increasing functions, we have $h^\prime(\theta_{i,n})\to a_1$ and $h^{\prime\prime}(\theta_{i,n})\to 2a_2$, as $\theta_n^* \to 0$. Also,
\begin{align}
M_n&=\sup_{\theta_{i,n}}[h^\prime(\theta_{i,n})^2+h^{\prime\prime}(\theta_{i,n})h(\theta_{i,n})]\le \sup_{\theta_{n}^*}[h^\prime(\theta_{n}^*)^2+h^{\prime\prime}(\theta_{n}^*)h(\theta_{n}^*)]\to (a_1^2+2a_0a_2),\label{cond2}
\end{align}
as $\theta_n^* \to 0$. Hence, $0<M_n < \infty$.

\noindent Applying Theorem \ref{th2}, we have from  \eqref{muchcrudebound}, 
\begin{align}
d_{TV}(S_n, N_{\lambda_0})&\le \frac{|\lambda_0-{\mathbb E}S_n|}{\max(1,\sqrt{\lambda_0})}+\frac{M_n \theta_n^*}{a_0^2  \max(1,\lambda_0)}\sum_{i=1}^{n}\theta_{i,n}.\label{tendto0}
\end{align}
 Note also that
\begin{equation}
{\mathbb E}S_n=\sum_{i=1}^{n}\frac{\theta_{i,n}h^\prime(\theta_{i,n})}{h(\theta_{i,n})}\to \frac{\lambda a_1}{a_0}=\lambda_0,\label{sccond}
\end{equation}
under the conditions \eqref{cond}. The result now follows from \eqref{tendto0},
 \eqref{sccond} and the assumptions.
\end{proof}

\begin{example} {\em 
	Let $X_{i,n}\sim$ Geo($p_{i,n}$), $1 \le i\le n$. Then $a_k=1$, for all $k \in {\mathbb Z}_+$, $h(\theta_{i,n})=(1-\theta_{i,n})^{-1}$, where $\theta_{i,n}=q_{i,n}=1-p_{i,n}$. If $\max_{1 \le i \le n}q_{i,n}\to 0$ and $\sum_{i=1}^{n}q_{i,n} \to \lambda$, as $n \to \infty$, then by Corollary \ref{cor}, $S_{n} \stackrel{\cal L}{\to} N_{\lambda}$, since $a_0=a_1=1$.}
	
\end{example}

\noindent
The next example concerns the Poisson approximation to the sum of logarithmic series rvs, which  has not been considered  in the literature.

\begin{example}\label{ex23} {\em 
Let $Y_{i,n}$, $1 \le i\le n$, $n \ge 1$, follow logarithmic series distribution with
$${\mathbb P}(Y_{i,n}=k)=-\frac{\theta_{i,n}^k}{k\ln(1-\theta_{i,n})},\quad 0<\theta_{i,n}<1,~ k=1,2,3,\dotsc.$$
Further, let $X_{i,n}=Y_{i,n}-1$. Then 
\begin{equation}
{\mathbb P}(X_{i,n}=k)=p_{i,n}(k)=-\frac{\theta_{i,n}^{k+1}}{(k+1)\ln(1-\theta_{i,n})},\quad k\in{\mathbb Z}_+.\label{vv1}
\end{equation}
Our interest is to obtain the error bound for the Poisson approximation to $S_n=\sum_{i=1}^{n}X_{i,n}$. Here, $a_k=1/(k+1)$, $h(\theta_{i,n})=-\ln(1-\theta_{i,n})/\theta_{i,n}$ and therefore,
\begin{equation}
p_{i,n}^*(k)=\frac{(k+1)a_{k+1}\theta_{i,n}^k}{h^\prime(\theta_{i,n})}=\frac{(k+1)\theta_{i,n}^{k+1}}{(k+2)\left(\frac{1}{1-\theta_{i,n}}+\frac{\ln(1-\theta_{i,n})}{\theta_{i,n}}\right)},\quad k\in{\mathbb Z}_+.\label{vv2}
\end{equation}
From \eqref{mainbound}, we have
\begin{equation}
d_{TV}(S_n, N_{\lambda})\le \frac{1}{\max(1,\lambda)}\sum_{i=1}^{n}{\mathbb E}X_{i,n}\sum_{k=1}^{\infty}k|p_{i,n}(k)-p_{i,n}^*(k)|,\label{vvv}
\end{equation}
where ${\mathbb E}(X_{i,n})={\mathbb E}(Y_{i,n})-1=-1-\theta_{i,n}/[(1-\theta_{i,n})\ln(1-\theta_{i,n})]$, $\lambda=\sum_{i=1}^{n}{\mathbb E}X_{i,n}$, and $p_{i,n}(k)$ and $p_{i,n}^*(k)$ defined as in \eqref{vv1} and \eqref{vv2}, respectively. Also, if $\max_{1 \le i \le n}\theta_{i,n}\to 0$ and $\sum_{j=1}^{n}\theta_{j,n} \to \lambda$, as $n \to \infty$, then, by Corollary \ref{cor}, $S_{n} \stackrel{\cal L}{\to} N_{\lambda/2}$, since $a_0=1$ and $a_1=1/2$. }
\end{example}

\subsection{The Identical Distributions Case}
Let $X_{i}$, $1 \le i\le n~ (n \geq 1)$, be a sequence of independent and identical rvs having the PSD with  pmf 
\begin{equation}
p_{n}(k)={\mathbb P}(X_{i}=k)=\frac{a_k \theta_{n}^k}{h(\theta_{n})},~k\in {\mathbb Z}_+,\label{psdpmf1}
\end{equation}
where $a_k\ge 0$, $\theta_{n}>0$, and $h(\theta_{n})=\displaystyle{\sum_{k=0}^{\infty}}a_k \theta_{n}^k$. Also, let $X_{i}^*$, $1 \le i\le n$, be a sequence of independent rvs having PSD corresponding to $h^\prime (\theta_{n})$, so that the pmf of $X_{i}^*$ is given by
\begin{equation}
p_{n}^*(k)={\mathbb P}(X_{i}^*=k)=\frac{(k+1)a_{k+1}\theta_{n}^k}{h^\prime(\theta_{n})},~k \in {\mathbb Z}_+.\label{primepsdpmf1}
\end{equation}

\begin{theorem}\label{thiid1}
Let $X_{i}$ and $X_{i}^*$, $1 \le i\le n$ be a sequence of rvs with pmf's $p_{n}(\cdot)$ and $p_{n}^*(\cdot),$  defined in \eqref{psdpmf1} and \eqref{primepsdpmf1}, respectively. Let $S_n=\sum_{i=1}^{n}X_i$, and $\mu_n={\mathbb E}S_n=n\theta_n h^\prime(\theta_n)/h(\theta_n)$. Then, from Theorem \ref{th11}, we have
\begin{align}
d_{TV}(S_n, N_{\lambda})&\le \frac{\left|\lambda-\mu_n\right|}{\max(1,\sqrt{\lambda})}+\frac{\mu_n}{\max(1,\lambda)}\sum_{k=1}^{\infty}k|p_{n}(k)-p_{n}^*(k)|,\label{mainboundiid}
\end{align}
Also, from Theorem  \ref{th2}, a crude bound is
\begin{align}
d_{TV}(S_n, N_{\lambda})&\le \frac{\left|\lambda-\mu_n\right|}{\max(1,\sqrt{\lambda})}+\frac{n \theta_n^2[h^\prime(\theta_{n})^2+h^{\prime\prime}(\theta_{n})h(\theta_{n})]}{a_0^2  \max(1,\lambda)}.\label{muchcrudebound5}
\end{align}
\end{theorem}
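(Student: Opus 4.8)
The plan is to derive both displayed inequalities as immediate specializations of Theorem \ref{th11} and Theorem \ref{th2} to the identically distributed case, so that no fresh Stein-method estimation is needed; the entire content lies in tracking how the per-index quantities collapse once $\theta_{i,n}=\theta_n$ for every $i$.

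First I would obtain \eqref{mainboundiid} from the general bound \eqref{mainboundd}. Under the i.i.d. hypothesis, each $X_i$ carries the common pmf $p_n(\cdot)$ of \eqref{psdpmf1} and the common associated pmf $p_n^*(\cdot)$ of \eqref{primepsdpmf1}, so every term indexed by $i$ in \eqref{mainboundd} is identical. Applying \eqref{psdmean} to the shared parameter gives $\mathbb{E}X_i=\theta_n h^\prime(\theta_n)/h(\theta_n)=\mu_n/n$, whence the $i$-sum reduces to $n\cdot(\mu_n/n)\sum_{k=1}^{\infty}k|p_n(k)-p_n^*(k)|=\mu_n\sum_{k=1}^{\infty}k|p_n(k)-p_n^*(k)|$. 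Replacing $\mathbb{E}S_n$ by $\mu_n$ in the leading term then reproduces \eqref{mainboundiid} exactly.

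Next I would specialize the crude bound \eqref{muchcrudebound}. Two collapses occur: since all parameters coincide, $\sum_{i=1}^{n}\theta_{i,n}^2=n\theta_n^2$, and the supremum defining $M_n$ is now taken over the single value $\theta_n$, so $M_n=h^\prime(\theta_n)^2+h^{\prime\prime}(\theta_n)h(\theta_n)$. Substituting these into \eqref{muchcrudebound} and again writing $\mathbb{E}S_n=\mu_n$ delivers \eqref{muchcrudebound5}.

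The only point requiring care is verificational rather than a genuine obstacle: one must confirm that the standing hypotheses of Theorem \ref{th2}---namely that $h$ be twice differentiable and that $0<M_n<\infty$---are inherited here. Both hold automatically, since $h$, $h^\prime$ and $h^{\prime\prime}$ are convergent series with nonnegative coefficients evaluated at $\theta_n>0$ with $a_0>0$, so $M_n$ is a single finite positive number. Because the statement already advertises these bounds as consequences from Theorem \ref{th11} and Theorem \ref{th2}, the proof amounts to recording the two specializations above.
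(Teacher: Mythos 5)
Your proposal is correct and matches the paper's own treatment: the paper states Theorem \ref{thiid1} without a separate proof, presenting both bounds as the immediate specializations of Theorem \ref{th11} (via $\mathbb{E}X_i=\mu_n/n$ and the collapse of the $i$-sum) and of Theorem \ref{th2} (via $M_n=h^\prime(\theta_n)^2+h^{\prime\prime}(\theta_n)h(\theta_n)$ and $\sum_{i=1}^n\theta_{i,n}^2=n\theta_n^2$) that you record. Your added check that the hypotheses of Theorem \ref{th2} are inherited in the identical case is a reasonable small supplement to what the paper leaves implicit.
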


\noindent
The following  result is Theorem $2$ of P\'{e}rez-Abreu \cite{APV}.
\begin{corollary}\label{cor1}
Let the conditions of Theorem \ref{thiid1} hold and $n \theta_{n}\to \lambda,~\text{as}~n \to \infty$, for some $\lambda >0$. Then $S_n \stackrel{\cal L}{\to}N_{\lambda_0}$, where $\lambda_0=\lambda a_1/a_0$ with $a_0>0$.
\end{corollary}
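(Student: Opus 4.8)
The plan is to mirror the proof of Corollary~\ref{cor}, now working in the i.i.d.\ setting and invoking the crude bound~\eqref{muchcrudebound5} of Theorem~\ref{thiid1} with the target mean taken to be $\lambda_0=\lambda a_1/a_0$. Since convergence in total variation distance implies convergence in law, it suffices to prove that $d_{TV}(S_n,N_{\lambda_0})\to 0$ as $n\to\infty$.

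First I would record the basic consequence of the hypothesis. Because $\theta_n>0$ and $n\theta_n\to\lambda<\infty$, we necessarily have $\theta_n\to 0$ as $n\to\infty$. This lets me pass to the limit in the relevant power series: since $h$, $h'$, and $h''$ are continuous near $0$, letting $\theta_n\to0$ gives $h(\theta_n)\to a_0$, $h'(\theta_n)\to a_1$, and $h''(\theta_n)\to 2a_2$, exactly as in~\eqref{cond2}. In particular, the assumption $a_0>0$ guarantees that $\mu_n$ and the bound~\eqref{muchcrudebound5} are well defined for all large $n$.

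Next I would show that the two terms on the right of~\eqref{muchcrudebound5}, with $\lambda$ replaced by $\lambda_0$, vanish separately. For the first term, combining $\mu_n=n\theta_n\,h'(\theta_n)/h(\theta_n)$ with the limits above gives $\mu_n\to\lambda\cdot(a_1/a_0)=\lambda_0$, so that $|\lambda_0-\mu_n|\to0$. For the second term, the bracket $h'(\theta_n)^2+h''(\theta_n)h(\theta_n)$ converges to the finite constant $a_1^2+2a_0a_2$, while the prefactor factors as $n\theta_n^2=(n\theta_n)\,\theta_n\to\lambda\cdot0=0$; hence the entire second term tends to $0$. Combining the two, $d_{TV}(S_n,N_{\lambda_0})\to0$, which is the desired conclusion.

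The argument is essentially routine once the crude bound is in hand, and the only point requiring care—rather than a genuine obstacle—is the factorization $n\theta_n^2=(n\theta_n)\theta_n$: it is precisely the extra power of $\theta_n$, vanishing because $\theta_n\to0$, that tames the otherwise linearly growing factor $n\theta_n$, and this is where the structure of a triangular array with small parameters is used. Compared with Corollary~\ref{cor}, the supremum over the $\theta_{i,n}$ collapses since the summands are identically distributed, so no separate control of a quantity like $\theta_n^*=\max_i\theta_{i,n}$ is needed.
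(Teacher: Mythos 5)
Your proof is correct and is exactly the argument the paper intends: the paper states Corollary~\ref{cor1} without proof as an immediate consequence of the crude bound \eqref{muchcrudebound5}, and your argument mirrors the paper's proof of the non-identical analogue, Corollary~\ref{cor} (note $n\theta_n\to\lambda$ plays the role of both conditions in \eqref{cond}, since $\theta_n\to 0$ follows automatically, as you observe). Your handling of the two terms---$\mu_n\to\lambda a_1/a_0$ via $h(\theta_n)\to a_0$, $h'(\theta_n)\to a_1$, and the factorization $n\theta_n^2=(n\theta_n)\theta_n\to 0$ with the bracket tending to $a_1^2+2a_0a_2$---is precisely the specialization of \eqref{tendto0}--\eqref{sccond} to the i.i.d.\ case.
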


\noindent
Next, we discuss two results, namely, binomial and negative binomial convergence to Poisson as application of Theorem \ref{thiid1}.

\begin{corollary}[Poisson Approximation to Binomial Distribution]\label{ex2}
Let $X_{i}\sim$ Ber$(p_n)$, for $1 \le i\le n$, and $S_n=\sum_{i=1}^{n}X_i$. If $p_n \to 0$ and $n p_n \to\lambda$, as $n \to \infty$, then $S_n\stackrel{\cal L}{\to}N_\lambda$.
\end{corollary}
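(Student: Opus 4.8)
The plan is to deduce this statement directly from Corollary \ref{cor1}, the general i.i.d.\ Poisson limit theorem for PSD's, after identifying the power series data attached to the Bernoulli law. First I would record that for $X_i \sim$ Ber$(p_n)$ the associated generating function is $h(\theta_n) = 1 + \theta_n$ with $a_0 = a_1 = 1$ and $a_k = 0$ for $k \ge 2$, and that the natural PSD parameter is $\theta_n = p_n/(1-p_n)$, exactly as in Example \ref{ex34}. In particular $a_0 = 1 > 0$ and $a_1/a_0 = 1$, so the limiting Poisson mean furnished by Corollary \ref{cor1} will be $\lambda_0 = \lambda a_1/a_0 = \lambda$.

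The single substantive step is to translate the hypotheses $p_n \to 0$ and $np_n \to \lambda$ into the hypothesis $n\theta_n \to \lambda$ demanded by Corollary \ref{cor1}. Since $\theta_n = p_n/(1-p_n)$, I would write $n\theta_n = np_n/(1-p_n)$ and note that $np_n \to \lambda$ together with $1 - p_n \to 1$ forces $n\theta_n \to \lambda > 0$. With this the hypotheses of Corollary \ref{cor1} are met with the same $\lambda$, and it yields $S_n \stackrel{\cal L}{\to} N_{\lambda_0} = N_\lambda$, as claimed.

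A fully self-contained alternative, should one prefer not to route through Corollary \ref{cor1}, is to bound the total variation distance directly. Taking $\mu_n = {\mathbb E}S_n = np_n$ and using the Bernoulli bound \eqref{ber} gives $d_{TV}(S_n, N_{\mu_n}) \le np_n^2/\max(1,\mu_n)$; since $np_n^2 = (np_n)\,p_n \to \lambda \cdot 0 = 0$, this term vanishes. A triangle inequality then compares $N_{\mu_n}$ with $N_\lambda$ through the elementary Poisson--Poisson bound $d_{TV}(N_{\mu_n}, N_\lambda) \le |\mu_n - \lambda|/\max(1,\sqrt{\lambda})$ established earlier, and this closes the argument because $\mu_n = np_n \to \lambda$.

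There is essentially no genuine obstacle: the result is a specialization of machinery already in hand. The only point requiring any care is the reparametrization---working with $\theta_n = p_n/(1-p_n)$ rather than $p_n$ itself---and checking that the factor $1/(1-p_n)$ is asymptotically harmless precisely because $p_n \to 0$. Everything else is bookkeeping of the constants $a_0$ and $a_1$.
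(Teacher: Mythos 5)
Your proposal is correct, but its primary route differs from the paper's. The paper proves this corollary quantitatively: it specializes the crude i.i.d.\ bound \eqref{muchcrudebound5} of Theorem \ref{thiid1} to the Bernoulli data $h(\theta_n)=1+\theta_n$, $\theta_n=p_n/(1-p_n)$, $h^\prime(\theta_n)=1$, $h^{\prime\prime}(\theta_n)=0$, obtaining the explicit estimate
\begin{equation*}
d_{TV}(S_n,N_\lambda)\le \frac{|\lambda-np_n|}{\max(1,\sqrt{\lambda})}+\frac{np_n^2}{\max(1,\lambda)(1-p_n)^2},
\end{equation*}
which tends to zero under the hypotheses. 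You instead invoke the qualitative limit theorem, Corollary \ref{cor1}; your parameter identification ($a_0=a_1=1$, $n\theta_n=np_n/(1-p_n)\to\lambda$, hence $\lambda_0=\lambda a_1/a_0=\lambda$) is exactly right, and since Corollary \ref{cor1} logically precedes this statement there is no circularity --- though note the paper states Corollary \ref{cor1} without proof (attributing it to P\'{e}rez-Abreu), so your derivation delegates precisely the analytic work the paper's proof chooses to display. What you lose on this route is the rate: Corollary \ref{cor1} yields convergence in law only, whereas the paper's specialization gives an explicit total variation bound. Your self-contained alternative recovers a rate and is in fact marginally sharper than the paper's: combining \eqref{ber} with the Poisson--Poisson estimate $d_{TV}(N_{\mu_n},N_\lambda)\le|\mu_n-\lambda|/\max(1,\sqrt{\lambda})$ from the paper's first example gives $np_n^2/\max(1,np_n)+|np_n-\lambda|/\max(1,\sqrt{\lambda})$, avoiding the factor $(1-p_n)^{-2}$ produced by the crude bound --- although the triangle inequality detour is unnecessary, since the first displayed bound of Example \ref{ex34} already controls $d_{TV}(S_n,N_\lambda)$ for an unmatched mean $\lambda$ in one step.
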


\begin{proof}
When $X_{i}\sim$ Ber$(p_n)$, $1 \le i\le n$, we have $a_k=1$, for $k=0,1$, and $a_k=0$, for $k\ge 2$. Also, $h(\theta_{n})=1+\theta_{n},$ where $\theta_{n}=p_n/(1-p_n)$ and $S_n\sim \text{Bi} (n,p_n)$. Note that $h^\prime(\theta_{n})=1~\text{and}~h^{\prime\prime}(\theta_{n})=0.$ Hence, from \eqref{muchcrudebound5},
\begin{align}
d_{TV}(S_n, N_{\lambda})&\le\frac{|\lambda-np_n|}{\max(1,\sqrt{\lambda})}+\frac{np_n^2}{\max(1,\lambda)(1-p_n)^2}.
\end{align}
The bound given above goes to zero if $p_n\to 0$ and $np_n\to \lambda$, as $n \to \infty$. This proves the result.
\end{proof}

\begin{corollary}[Poisson Approximation to Negative Binomial Distribution]\label{ex3}
Let $X_{i}\sim$ Geo$(p_n)$, for $1 \le i\le n$, and $S_n=\sum_{i=1}^{n}X_i$. If $p_n\to 1$ and $n(1-p_n)\to \lambda$, as $n \to \infty$, then $S_n\stackrel{\cal L}{\to}N_\lambda$.
\end{corollary}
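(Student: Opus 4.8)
The plan is to mirror the proof of Corollary \ref{ex2} (the binomial case) and work directly from the crude bound \eqref{muchcrudebound5} of Theorem \ref{thiid1}, specialized to the geometric distribution. First I would record the power series data for Geo($p_n$): here $a_k=1$ for all $k\in{\mathbb Z}_+$, $\theta_n=q_n=1-p_n$, and $h(\theta_n)=(1-\theta_n)^{-1}$, so that $S_n\sim$ NB($n,p_n$) with mean $\mu_n=n q_n/p_n$. Differentiating gives $h^\prime(\theta_n)=(1-\theta_n)^{-2}$ and $h^{\prime\prime}(\theta_n)=2(1-\theta_n)^{-3}$, whence the bracketed factor in \eqref{muchcrudebound5} collapses, using $1-\theta_n=p_n$, to $h^\prime(\theta_n)^2+h^{\prime\prime}(\theta_n)h(\theta_n)=3/p_n^4$. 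Since $a_0=1$, the crude bound becomes
\[
d_{TV}(S_n,N_\lambda)\le\frac{|\lambda-n q_n/p_n|}{\max(1,\sqrt{\lambda})}+\frac{3 n q_n^2}{p_n^4\,\max(1,\lambda)}.
\]

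Next I would let $n\to\infty$ under the hypotheses $p_n\to1$ and $n(1-p_n)=n q_n\to\lambda$. For the first term, $\mu_n=n q_n/p_n\to\lambda/1=\lambda$, so $|\lambda-\mu_n|\to0$. For the second term, the key observation is that $n q_n^2=(n q_n)\,q_n\to\lambda\cdot 0=0$, while $p_n^4\to1$; hence the whole term vanishes. Both contributions therefore tend to zero, giving $d_{TV}(S_n,N_\lambda)\to0$, which is the claimed convergence in law.

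There is essentially no hard step here: the result is a direct specialization of the machinery already built. The only point requiring a moment's care is the behaviour of $n q_n^2$, which is handled by writing it as the product of the convergent factor $n q_n$ and the vanishing factor $q_n$; this is exactly the geometric analogue of the $n p_n^2\to0$ estimate used for the binomial in Corollary \ref{ex2}. Alternatively, one could bypass the explicit computation entirely by invoking Corollary \ref{cor1}: since $\theta_n=q_n\to0$ and $n\theta_n\to\lambda$, that corollary yields $S_n\stackrel{\cal L}{\to}N_{\lambda_0}$ with $\lambda_0=\lambda a_1/a_0=\lambda$, because $a_0=a_1=1$ for the geometric PSD. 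I would favor the explicit route, however, to keep the corollary self-contained and parallel to the binomial case.
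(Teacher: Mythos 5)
Your proposal is correct and follows essentially the same route as the paper: both specialize the crude bound \eqref{muchcrudebound5} to the geometric PSD, compute $h^\prime(\theta_n)^2+h^{\prime\prime}(\theta_n)h(\theta_n)=3/p_n^4$, and pass to the limit using $n(1-p_n)\to\lambda$ and $p_n\to 1$, exactly as in the paper's proof. Your added remark that $nq_n^2=(nq_n)q_n\to 0$, and the observation that Corollary \ref{cor1} gives an immediate alternative since $a_0=a_1=1$, are both sound but do not change the argument.
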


\begin{proof}
Since $X_{i}\sim Geo(p_n)$, $1 \le i\le n$, we have $a_k=1$, for all $k \in {\mathbb Z}_+$. Also, $h(\theta_{n})=(1-\theta_{n})^{-1},$ where $\theta_{n}=1-p_n$ and $S_n\sim $ NB($n,p_n$). Note that $h^\prime(\theta_{n})=(1-\theta_{n})^{-2}=p_n^{-2}$ and $h^{\prime\prime}(\theta_{n})=2(1-\theta_{n})^{-3}=2p_n^{-3}$. Hence, from \eqref{muchcrudebound5}, we have
\begin{align*}
d_{TV}(S_n, N_{\lambda})&\le \frac{|\lambda-\frac{n(1-p_n)}{p_n}|}{\max(1,\sqrt{\lambda})}+\frac{3n(1-p_n)^2}{\max(1,\lambda)p_n^4}\to 0,
\end{align*}
if $p_n\to 1$ and $n(1-p_n)\to \lambda$, as $n \to \infty$. This proves the result.
\end{proof}

\begin{example}
Consider the similar argument discussed in Example \ref{ex23} for logarithmic series distribution under identical setup. Then, we have $a_k=1/(k+1)$ and $h(\theta_n)=-\ln(1-\theta_n)/\theta_n$. Also, let $n \theta_n \to \lambda$, as $n \to \infty$. Then by Corollary \ref{cor1}, $S_{n} \to N_{\lambda/2}$, since $a_0=1$ and $a_1=1/2$.
\end{example}

\section{Comparison of Poisson and Negative Binomial Bounds}\label{CPNB}
In this section, we present the results for negative binomial approximation derived by Vellaisamy {\em et al.} \cite{VUC} for the PSD's. Also, we compare the bounds with the Poisson approximation bounds through some relevant examples. Let $M_{r,p}$ denote the negative binomial rv with parameter $r>0$ and $p=1-q\in (0,1)$ with
$${\mathbb P}(M_{r,p}=k)=\binom{r+k-1}{k}p^r q^k,\quad k \in{\mathbb Z}_+.$$
Also, let $S_n=\sum_{i=1}^{n}X_{i,n}$, where $X_i$'s are defined in \eqref{psdpmf} such that
\begin{equation}
{\mathbb E}S_n=\displaystyle{\sum_{i=1}^{n}}{\mathbb E}X_{i,n}=\sum_{i=1}^{n}\frac{\theta_{i,n} h^\prime(\theta_{i,n})}{h(\theta_{i,n})}=\frac{rq}{p}.\label{meancond}
\end{equation}
The condition \eqref{meancond} implies ${\mathbb E}(S_n)={\mathbb E}(M_{r,p})$, the first moment matching. Then, from Theorem $3.1$ of Vellaisamy {\em et al.} \cite{VUC}, the one-parameter approximation bound is
\begin{align*}
d_{TV}(S_n, M_{r,p})&\le \frac{1}{rp}\sum_{i=1}^{n}\sum_{k=1}^{\infty}k|p{\mathbb E}X_{i,n}p_{i,n}(k)+qkp_{i,n}(k)-(k+1)p_{i,n}(k+1)|\\
&=\frac{1}{rp}\sum_{i=1}^{n}{\mathbb E}X_{i,n}\sum_{k=1}^{\infty}k\left|p~p_{i,n}(k)+q\frac{kp_{i,n}(k)}{{\mathbb E}X_{i,n}}-\frac{(k+1)p_{i,n}(k+1)}{{\mathbb E}X_{i,n}}\right|.
\end{align*} 
Using \eqref{last}, we have
\begin{equation}
d_{TV}(S_n, M_{r,p})\le \frac{1}{rq}\sum_{i=1}^{n}{\mathbb E}X_{i,n}\sum_{k=1}^{\infty}k|p~p_{i,n}(k)+q~p_{i,n}^*(k-1)-p_{i,n}^*(k)|.\label{mainboundNB}
\end{equation}
Furthermore, let
$$\tau:=2\max_{1\le i \le n}d_{TV}(W_{i,n},W_{i,n}+1)=\max_{1\le i\le n}\sum_{k=1}^{\infty}|{\mathbb P}(W_{i,n}=k)-{\mathbb P}(W_{i,n}=k-1)|,$$
where $W_{i,n}= (S_n-X_{i,n})$. Choose now  $r$ and $p$ such that
\begin{equation}
r=\frac{({\mathbb E}S_n)^2}{\mathrm{Var}(S_n)-{\mathbb E}S_n}\quad \text{and} \quad p=\frac{{\mathbb E}S_n}{\mathrm{Var}(S_n)}.\label{meanvarcond}
\end{equation}
Then, from Theorem $4.1$ of Vellaisamy {\em et al.} \cite{VUC}, two-parameter approximation bound is
\begin{align*}
d_{TV}(S_n, M_{r,p})\le \frac{\tau}{rq}\sum_{i=1}^{n}\sum_{k=1}^{\infty}k\left(\frac{k-1}{2}+{\mathbb E}X_{i,n}\right)|p{\mathbb E}X_{i,n}p_{i,n}(k)+qk p_{i,n}(k)-(k+1)p_{i,n}(k+1)|.
\end{align*}
Using \eqref{last}, we have
\begin{equation}
d_{TV}(S_n, M_{r,p})\le \frac{\tau}{rq}\sum_{i=1}^{n}{\mathbb E}X_{i,n}\sum_{k=1}^{\infty}k\left(\frac{k-1}{2}+{\mathbb E}X_{i,n}\right)|p~p_{i,n}(k)+q~p_{i,n}^*(k-1)-p_{i,n}^*(k)|.\label{mainboundNB1}
\end{equation}
Also, from Remark $4.1$ of Vellaisamy {\em et al.} \cite{VUC},  $\tau \le \sqrt{\frac{2}{\pi}}\left(\frac{1}{4}+\sum_{i=1}^{n}\tau_i-\tau^*\right)^{-1/2}$, where $\tau_i=\min\{\frac{1}{2}, 1-d_{TV}(X_{i,n},X_{i,n}+1)\}$ and $\tau^*=\max_{1 \le i \le n}\tau_i$.\\
When $X_{i,n}\sim $ Ber($p_{i,n}$), $1 \le i\le n$, and fix $r=n$ then, using Example \ref{ex34} and \eqref{mainboundNB}, we have
\begin{equation}
d_{TV}(S_n,M_{n,p})\le \frac{2 \sum_{i=1}^{n}p_{i,n}^2}{\sum_{i=1}^{n}p_{i,n}},\label{qwqw}
\end{equation}
where $p=1/\left(1+(1/n)\sum_{i=1}^{n}p_{i,n}\right)$. Observe that the Poisson approximation (see \eqref{ber}) in this case is better than the negative binomial approximation. Also, the bound given in \eqref{mainboundNB1} is not valid as the condition \eqref{meanvarcond} is not a valid choice of parameters. 

\noindent
When $X_{i,n}\sim$ Geo($p_{i,n}$), $1 \le i\le n$, and for $q_{i,n}\le 1/2$, we have from (14) and (19) of Vellaisamy {\em et al.} \cite{VUC} that
\begin{equation}
d_{TV}(S_n,M_{r,p})\le \frac{1}{rq}\sum_{i=1}^{n}|p-p_{i,n}|\frac{q_{i,n}}{p_{i,n}^2}\label{eqn1}
\end{equation} 
and
\begin{equation}
d_{TV}(S_n,M_{r,p})\le 3 \sqrt{\frac{2}{\pi}}\left(\sum_{i=1}^{n}q_{i,n}-\frac{1}{4}\right)^{-1/2}\left(\sum_{i=1}^{n}\frac{q_{i,n}}{p_{i,n}}\right)^{-1}\sum_{i=1}^{n}\left|\frac{1}{p_i}-\frac{1}{p}\right|\left(\frac{q_{i,n}}{p_{i,n}}\right)^2,\label{eqn2}
\end{equation}
respectively. Now, we give a numerical comparison of the above bounds with the Poisson approximation bound. Let us choose the values of $q_{i,n}=1-p_{i,n}$ for various values of $i$ as follows:

\begin{table}[H]
  \centering
  \caption{The values of $q_{i,n}$ for numerical computations.}
   \label{tab1}
\resizebox{0.95\textwidth}{!}{%
\begin{tabular}{cccccccccccc}
\toprule
$i$       & 1-10 & 11-20 & 21-50 & 51-100 & 101-150 &  151-200 & 201-250 & 251-300 & 301-400 & 401-500\\
$q_{i,n}$ & 0.20 & 0.18   & 0.16    & 0.14    & 0.12    & 0.10    & 0.08    & 0.06    & 0.04    & 0.02\\
\bottomrule
\end{tabular}}
\end{table}

\noindent
Also, from \eqref{eqn}, \eqref{eqn1} and \eqref{eqn2}, we have the following numerical computation of the bounds.

\begin{table}[H]
  \centering
  \caption{Comparison of bounds.}
  \resizebox{0.95\textwidth}{!}{\begin{tabular}{lccccccccccc}
 \toprule
\multirow{2}{*}{$n$} & Poisson Approximation & Negative Binomial Approximation & Negative Binomial Approximation\\
                     & (from \eqref{eqn})& (one moment matching, from \eqref{eqn1}) & (two moments matching, from \eqref{eqn2})\\
\midrule
\vspace{0.05cm}
10 & 0.25 & 0 & 0 \\
\vspace{0.05cm}
20 & 0.2357460 & 0.0152439 & 0.0045271 \\
\vspace{0.05cm}
50 & 0.2108950 & 0.0221529 & 0.0040439 \\
\vspace{0.05cm}
100& 0.1897860 & 0.0242177 & 0.0029142 \\
\vspace{0.05cm}
150& 0.1754270 & 0.0279765 & 0.0028037 \\
\vspace{0.05cm}
200& 0.1638720 & 0.0329378 & 0.0025511 \\
\vspace{0.05cm}
250& 0.1543910 & 0.0379188 & 0.0025933 \\
\vspace{0.05cm}
300& 0.1468760 & 0.0436179 & 0.0025801 \\
\vspace{0.05cm}
400& 0.1365930 & 0.0531102 & 0.0024826 \\
500& 0.1312850 & 0.0612465 & 0.0024836 \\
\bottomrule
\end{tabular}}
\end{table}

\noindent
Note that, for $n \le 10, p_{i,n}=0.8 $ and so the bounds for negative binomial approximation are zero as sum of iid geometric is negative binomial  From the table, we see that negative binomial approximation is better than Poisson approximation. Also, the bound obtained by matching first two moments is better than the bound obtained by matching one moment, as expected.

\begin{example} {\em
Consider again Example \ref{ex23} for logarithmic series distribution. Note that
\begin{align*}
p_{i,n}(k-1)-p_{i,n}(k)&=-\frac{\theta_{i,n}^{k}}{k(k+1)\ln(1-\theta_{i,n})}\left[k(1-\theta_{i,n})+1\right]\ge 0.
\end{align*}
Therefore,
\begin{align*}
d_{TV}(X_{i,n},X_{i,n}+1)&= \frac{1}{2}\sum_{k=0}^{\infty}|p_{i,n}(k-1)-p_{i,n}(k)|=\frac{1}{2}\left[p_{i,n}(0)+\sum_{k=1}^{\infty}[p_{i,n}(k-1)-p_{i,n}(k)]\right]\\
&=\frac{1}{2}\left[p_{i,n}(0)+p_{i,n}(0)\right]=p_{i,n}(0)=-\frac{\theta_{i,n}}{\ln(1-\theta_{i,n})}.
\end{align*}
From \eqref{mainboundNB} and \eqref{mainboundNB1}, we have
\begin{equation}
d_{TV}(S_n, M_{r,p})\le \frac{1}{rq}\sum_{i=1}^{n}{\mathbb E}X_{i,n}\sum_{k=1}^{\infty}k|p~p_{i,n}(k)+q~p_{i,n}^*(k-1)-p_{i,n}^*(k)|\label{vvv1}
\end{equation}
and
\begin{equation}
d_{TV}(S_n, M_{r,p})\le \frac{\tau}{rq}\sum_{i=1}^{n}{\mathbb E}X_{i,n}\sum_{k=1}^{\infty}k\left(\frac{k-1}{2}+{\mathbb E}X_{i,n}\right)|p~p_{i,n}(k)+q~p_{i,n}^*(k-1)-p_{i,n}^*(k)|,\label{vvv2}
\end{equation}
where $\tau \le \sqrt{\frac{2}{\pi}}\left(\frac{1}{4}+\sum_{i=1}^{n}\tau_i-\tau^*\right)^{-1/2},$  $\tau_i=\min\{\frac{1}{2}, 1+(\theta_{i,n}/\ln(1-\theta_{i,n}))\}$ and $\tau^*=\max_{1 \le i \le n}\tau_i$, ${\mathbb E}(X_{i,n})={\mathbb E}(Y_{i,n})-1=-1-\theta_{i,n}/[(1-\theta_{i,n})\ln(1-\theta_{i,n})]$, and $p_{i,n}(k)$ and $p_{i,n}^*(k)$ defined as in \eqref{vv1} and \eqref{vv2}, respectively. Also, for \eqref{vvv1} and \eqref{vvv2}, $r$ and $p$ can be evaluated using the conditions \eqref{meancond} and \eqref{meanvarcond}, respectively.

\noindent
From \eqref{vvv} and \eqref{vvv1} with $r=n/5$ and $p=1/\left(1+(5/n)\sum_{i=1}^{n}(-1-\theta_{i,n}/[(1-\theta_{i,n})\ln(1-\theta_{i,n})])\right)$, and \eqref{vvv2}, we have the following numerical computation of the bounds with $\theta_{i,n}=q_{i,n}$ given in Table \ref{tab1}. It is difficult to compute the bounds in a compact form. So, the bounds are computed by using Mathematica Software, version 11.3. 
\begin{table}[H]
  \centering
  \caption{Comparison of bounds.}
  \resizebox{0.95\textwidth}{!}{\begin{tabular}{lccccccccccc}
 \toprule
\multirow{2}{*}{$n$} & Poisson Approximation & Negative Binomial Approximation & Negative Binomial Approximation\\
                     & (from \eqref{vvv})& (one moment matching, from \eqref{vvv1}) & (two moments matching, from \eqref{vvv2})\\
\midrule
\vspace{0.05cm}
10 & 0.2068330 & 0.3949420 & 0.0033845 \\
\vspace{0.05cm}
20 & 0.1950790 & 0.3711290 & 0.0033441 \\
\vspace{0.05cm}
50 & 0.1745720 & 0.3293270 & 0.0028029 \\
\vspace{0.05cm}
100& 0.1571360 & 0.2931890 & 0.0022248 \\
\vspace{0.05cm}
150& 0.1452490 & 0.2661830 & 0.0019849 \\
\vspace{0.05cm}
200& 0.1356570 & 0.2411430 & 0.0019339 \\
\vspace{0.05cm}
250& 0.1277630 & 0.2165510 & 0.0019276 \\
\vspace{0.05cm}
300& 0.1214880 & 0.1917620 & 0.0019011 \\
\vspace{0.05cm}
400& 0.1128780 & 0.1479230 & 0.0018827 \\
500& 0.1084220 & 0.1086410 & 0.0018696 \\
\bottomrule
\end{tabular}}
\end{table}

\noindent
From the above table, it is clear  that the Poisson approximation is better than negative binomial approximation with one moment matching. However, the negative binomial approximation with two moments matching is better than  the Poisson approximation with one moment matching.}
\end{example}

\singlespacing
\small
\bibliographystyle{PV}
\bibliography{PA2PSDBib}

\end{document}